\begin{document}

\newcommand{\ov}[1]{\overline{#1}}
\newcommand{\un}[1]{\underline{#1}}
\newcommand{\n}[1]{\lVert#1\rVert}
\newcommand{\eq}[1]{\begin{align}#1\end{align}}
\newcommand{\red}{{\color{red}RED}}
\newcommand{\blue}{{\color{blue}BLUE}}
\newcommand{\green}{{\color{green}GREEN}}
\newcommand{\pink}{{\color{pink}PINK}}
\providecommand{\bs}{\begin{subequations}}
\providecommand{\es}{\end{subequations}}
\newcommand{\com}[1]{}

\newtheorem{theo}{Theorem}
\newtheorem{lem}{Lemma}
\theoremstyle{remark}
\newtheorem{rem}{Remark}

\title{A model of morphogen transport\\ in the presence of glypicans I}

\author{Marcin Ma\l ogrosz
\footnote{Institute of Applied Mathematics and Mechanics, University of Warsaw, Banacha 2, 02-097 Warsaw, Poland (malogrosz@mimuw.edu.pl)}}
\date{}
\maketitle

\begin{abstract}
\noindent
We analyze a one dimensional version of a model of morphogen transport, a biological process governing cell differentiation. The model was proposed by Hufnagel et al. to describe the forming of morphogen gradient in the wing imaginal disc of the fruit fly. In mathematical terms the model is a system of reaction-diffusion equations which consists of two parabolic PDE's and three ODE's. The source of ligands is modelled by a Dirac Delta. Using semigroup approach and $L_1$ techniques we prove that the system is well-posed and possesses a unique steady state. All results are proved without imposing any artificial restrictions on the range of parameters. 
\end{abstract}

\textbf{AMS classification} 35B40, 35Q92.

\textbf{Keywords} morphogen transport, reaction-diffusion equations, uniqueness.

\section{Introduction}

Morphogen transport (MT) is a biological process occurring in the tissues of living organisms. It is known that certain proteins (ligands) act as the morphogen - a conceptually defined substance which is responsible for the development and differentiation of cells. As it is proposed in the 'French flag model' by Wolpert \cite{Wol}, morphogen molecules spread from a spatially localized source through the tissue and after some time form stable concentration gradient. According to the concept of positional signalling, receptors located on the surface of the cells, detect information about local levels of morphogen concentration. This information is transmitted to nucleus and cause gene activation which finally leads to the synthesis of suitable proteins and cell differentiation. Although the role of morphogen gradient in gene expression seems to be widely accepted, the exact kinetic mechanism of its formation is still not known (see \cite{GB},\cite{KPBKBJG-G},\cite{KW}).

Recently various models of MT, consisting of PDE-ODE systems, were proposed (see \cite{LNW},\cite{BKPG-GJ},\cite{Huf}) and analyzed (see \cite{KLW1},\cite{KLW2},\cite{Tel},\cite{Mal},\cite{STW}). These models assume that movement of morphogen molecules occur by different types of diffusion or by chemotaxis in the extracellular medium. Reactions with receptors (reversible binding, transcytosis) and various possibilities of degradation and internalization (of morphogens, receptors, morphogen-receptor complexes) are also being considered. 

In \cite{Huf} Hufnagel et al. proposed a model, which we denote \textbf{[HKCS]}, of the formation of morphogen Wingless (Wg) in the Drosophila Melanogaster wing imaginal disc. Apart from mentioned before processes, the model takes into account interaction of Wg with glypican Dally-like (Dlp) - protein which, similarly to receptor, interacts with morphogen through association-dissociation mechanism. Dlp molecules may also transmit Wg to each other causing the movement of morphogen particles on the surface of the wing disc.  
The interesting issue is the presence of a singular term (a Dirac Delta), to model the secretion of morphogen from a narrow part of the tissue which is represented in the model by a point source.
In the present paper we analyze a one dimensional simplification of the \textbf{[HKCS]} model, which was also introduced in \cite{Huf}. We intend to extend our analysis to the case of higher dimensional domains in the forthcoming article(s).

\subsection{The \textbf{[HKCS].1D} model.}
In this section we present a one dimensional simplification of the model \textbf{[HKCS]} introduced in $\cite{Huf}$.\\
\newline
For $L>0, \ \infty\geq T>0$, denote $I^L=(-L,L), \ \partial I^L=\{-L,L\}, \ I^L_T=(0,T)\times I^L, \ (\partial I^L)_T=(0,T)\times\partial I^L, \ I=I^1$.
The following system is of our interest:\\
\newline
\textbf{[HKCS].1D}
\bs\label{Model}
\eq{
\partial_tW&=D\partial^2_{xx}W-\gamma W - [kGW-k'W^*] - [k_RRW-k_R'R^*] + s\delta, &&(t,x)\in I^L_{\infty}\\
\partial_tW^*&=D^*\partial^2_{xx}W^*-\gamma^* W^*+[kGW-k'W^*]-[k_{Rg}RW^*-k_{Rg}'R_g^*], &&(t,x)\in I^L_{\infty}\\
\partial_tR&= -[k_RRW-k_R'R^*]-[k_{Rg}RW^*-k_{Rg}'R_g^*]-\alpha R+\Gamma, &&(t,x)\in I^L_{\infty}\\
\partial_tR^*&=[k_RRW-k_R'R^*] -\alpha^*R^*, &&(t,x)\in I^L_{\infty}\\
\partial_tR_{g}^*&=[k_{Rg}RW^*-k_{Rg}'R_g^*]-\alpha^*R_{g}^*, &&(t,x)\in I^L_{\infty}\\
\partial_xW&=\partial_xW^*=0, &&(t,x)\in(\partial I^L)_{\infty}\\
W(0)&=W_0, \ W^*(0)=W^*_0,\  R^*(0)=R^*_0,\ R_g^*(0)=R_{g0}^*,\ R(0)=R_0, &&x\in I^L
}
\es

In \eqref{Model} $W, G, R$ denote concentrations of free morphogens Wg, 
free glypicans Dlp and free receptors,\\ $W^*, R^*$ denote concentrations of morphogen-glypican
and morphogen-receptor complexes,\\ $R_g^*$ denotes concentration of morphogen-glypican-receptor complexes.\\
The model takes into account association-dissociation mechanism of
\begin{itemize}
\item $W$ and $G$ with rates $k, k'$ ($kGW-k'W^*$),
\item $W$ and $R$ with rates $k_R, k_R'$ ($k_RRW-k_R'R^*$),
\item $W^*$ and $R$ with rates $k_{Rg}, k_{Rg}'$ ($k_{Rg}RW^*-k_{Rg}'R_g^*$).
\end{itemize}

Other terms of the system account for
\begin{itemize}
\item linear diffusion of $W$ $(W^*)$ with rate $D$ $(D^*)$: $-D\partial^2_{xx}W$ $(-D^*\partial^2_{xx}W^*)$,
\item degradation of $W$ $(W^*)$ with rate $\gamma$ $(\gamma^*)$: $-\gamma W$ $(-\gamma^* W^*)$,
\item internalization (endocytosis) of $R$ $(R^*,R_g^*)$ with rate $\alpha$ $(\alpha^*)$ : $-\alpha R$ $(-\alpha^* R^*, -\alpha^* R_g^*)$,
\item secretion of $W$ with rate $s$ from the source localised at $x=0\in I$: $s\delta$ ($\delta$ denotes the Dirac Delta),
\item production of receptors: $\Gamma$.
\end{itemize}

For simplicity we assume that $G$ and $\Gamma$ are given, strictly positive constants.

\subsection{Nondimensionalisation}

To analyze system \textbf{[HKCS].1D} we rewrite it in a nondimensional form:
\bs\label{System}
\eq{
\partial_t u_1-\partial^2_{xx} u_1&=-(b_1+c_1+u_3)u_1+c_2u_2+c_4u_4+p_1\delta,&&(t,x)\in I_{\infty}\label{SystemA}\\
\partial_t u_2-d\partial^2_{xx} u_2&=-(b_2+c_2+c_{3}u_3)u_2+c_1u_1+c_5u_5,&&(t,x)\in I_{\infty}\label{SystemB}\\
\partial_t u_3&=-(b_3+u_1+c_{3}u_2)u_3+c_4u_4+c_5u_5+p_3,&&(t,x)\in I_{\infty}\label{SystemC}\\
\partial_t u_4&=-(b_4+c_4)u_4+u_1u_3,&&(t,x)\in I_{\infty}\label{SystemD}\\
\partial_t u_5&=-(b_5+c_5)u_5+c_{3}u_2u_3,&&(t,x)\in I_{\infty}\label{SystemE}
}
\es
with boundary and initial conditions
\begin{align*}
\partial_x u_1&=\partial_x u_2=0,&&(t,x)\in(\partial I)_{\infty}\\
\mathbf{u}(0,\cdot)&=\bold{u_0},&&x\in I
\end{align*}
where
\begin{align*}
\mathbf{u}(t,x)&=(u_1,u_2,u_3,u_4,u_5)(t,x)=K(W,W^*,R,R^*,R_g^*)(Tt,Lx),\\
\bold{u_0}(x)&=(u_{10},u_{20},u_{30},u_{40},u_{50})(x)=K(W_0,W^*_0,R_0,R^*_0,R_{g0}^*)(Lx),\\
T&=L^2/D,\ K=Tk_R ,\ d=D^*/D,\\
\bold{b}&=(T\gamma,T\gamma^*,T\alpha,T\alpha^*,T\alpha^*),\\
\bold{c}&=(TkG,Tk',k_{Rg}/k_R,Tk_R',Tk_{Rg}'),\\
\bold{p}&=(KTs,0,KT\Gamma,0,0).
\end{align*}

\subsection{Overview.}
The aim of this paper is to establish well-posedness of \eqref{System} and the existence of a unique steady state.

During the analysis of the system \eqref{System}
we encounter the following difficulties:
\begin{itemize}
\item absence of diffusion in equations, \eqref{SystemC},\eqref{SystemD}, \eqref{SystemE} so that there is no smoothing effect for  $u_3,u_4,u_5$.
\item singular source term in \eqref{SystemA},
\item nonsymmetric zero order part of the operator for the stationary problem.
\end{itemize}

We first solve the stationary problem for $\eqref{System}$ by using Schauder's fixed point theorem. The key observation is that the linear operator which appears in the definition of $T_n(v)$ (see proof of Theorem 1), has a diagonally dominant structure. This leads us to analyze the problem in an $L_1(I)$ setting rather than $L_2(I)$. To prove uniqueness we consider the system which is satisfied by the difference of two possible solutions and after algebraic manipulations show that it also has a diagonally dominant structure.\\
To remove the singularity $p_1\delta$ from \eqref{SystemA} we change variables $\bold{z}=\bold{u}-\bold{u^*}$, where $\bold{u^*}$ is the steady state to \eqref{System}. Then local well-posedness in the space of continuous functions of the system for $\bold{z}$  follows from the classical perturbation theory for sectorial operators. To prove global existence we notice that the quasipositivity of the vector field appearing on the right hand side of $\eqref{System}$ guarantees that the semiflow generated by $\eqref{System}$ preserves the positive cone. Then using compensation effects it is easy to show that $u_3,u_4,u_5\in L_{\infty}(0,T_{max};C(I))$ and $u_1,u_2\in L_{\infty}(0,T_{max};L_1(I))$. Finally using smoothing effects of the heat semigroup we prove that $u_1,u_2\in L_{\infty}(0,T_{max};C(I))$, from which we conclude that system $\eqref{System}$ is globally well-posed and has bounded trajectories.   

Before stating the results precisely we introduce the notation and function spaces in which we will analyze the system \eqref{System}.
\section{Notation and function spaces}
For $x,y\in\mathbb{R},\bold{x},\bold{y}\in\mathbb{R}^n$ we denote 
\begin{align*}
x\vee y&=\max\{x,y\}, \ x\wedge y=\min\{x,y\},\
x_{+}=x\vee 0, \ x_{-}=(-x)\vee 0,\
sgn(x)=\begin{cases}|x|/x &, x\neq0\\ 0 &, x=0\end{cases},\\
\overline{\bold{x}}&=\max\{x_i\colon 1\leq i\leq n\},\ \underline{\bold{x}}=\min\{x_i\colon 1\leq i\leq n\},\ 
<\bold{x},\bold{y}>=\sum_{i=1}^nx_iy_i.
\end{align*} 
If $V$ is a vector space we denote by $V^n$ the n-th product power of $V$. If $(V,\geq)$ is a partially ordered vector space we denote its positive cone by $V_{+}:=\{v\in V\colon \ v\geq 0\}$. If $V,W$ are normed spaces we denote by $\mathcal{L}(V,W)$ the space of linear, bounded operators from $V$ to $W$ with the usual uniform convergence topology. We put $\mathcal{L}(V)=\mathcal{L}(V,V)$. 

We make standard convention that $C$ denotes a positive constant which depends on various parameters which are specified explicitly in the text.

To analyze the problem we will use the following Banach spaces
\begin{align*}
X&=X_0=C(\overline{I}), \ X_1=C^2_N(\overline{I})=\{u\colon u\in C^2(\overline{I}), \ u'(-1)=u'(1)=0\}, \\
X_{1/2}&=C^{0,1}(I)=Lip(I)=W^1_{\infty}(I),\\
Y&=Y_0=L_1(I), \ Y_1=W^2_{1,N}(I)=\{u\colon u\in W^2_1(I), u'(-1)=u'(1)=0\},\\
Y_{1/2}&=W^1_1(I)=AC(I).
\end{align*}

Notice that due to the imbedding $W^2_1(I)\subset C^1(\overline{I})$ the boundary conditions in the definition of $Y_1$ are meaningful.

\section{Results}
From now on we assume that
\begin{align*}
d,\bold{b}>0, \ 
\bold{c},\bold{p}\geq0, \
\bold{u_0}\in X^5_{+}.
\end{align*}
We start with the analysis of the stationary problem and prove that there exists unique nonnegative steady state. Observe that due to the absence of diffusion in \eqref{SystemC},\eqref{SystemD},\eqref{SystemE} the stationary problem reduces to the system \eqref{SS2} (see below) of two semilinear elliptic equations for $u_1^*$ and $u_2^*$.\\
\begin{theo}
System $\eqref{System}$ possesses a unique nonnegative steady state\\
$\bold{u^*}\in X_{1/2}\times X_1\times X_{1/2}^3$
such that
\bs\label{miau}
\eq{
u^*_3=H(u^*_1,u^*_2), \ b_4u^*_4=k_1u_1^*H(u^*_1,u^*_2), \ b_5u^*_5=k_2u_2^*H(u^*_1,u^*_2),
}
\es
where
\begin{align}
k_1=b_4/(b_4+c_4), \ k_2=c_{3}b_5/(b_5+c_5), \ H(x_1,x_2)=p_3/(k_1x_1+k_2x_2+b_3)\label{Hdef}
\end{align}
and $(u^*_1,u^*_2)$ is a solution of the following boundary value problem
\bs\label{SS1}
\eq{
-{u^*_1}''+(b_1+c_1+k_1H(u^*_1,u^*_2))u^*_1-c_2u^*_2&=p_1\delta,&&x\in I\label{SS1a}\\
-d{u^*_2}''-c_1u^*_1+(b_2+c_2+k_2H(u^*_1,u^*_2))u^*_2&=0,&&x\in I\label{SS1b},\\
{u^*_1}'&={u^*_2}'=0,&&x\in\partial I.
}
\es
i.e. for every $\varphi\in X_{1/2}$
\begin{align*}
\int_I[{u^*_1}'\varphi'+((b_1+c_1+k_1H(u^*_1,u^*_2))u^*_1-c_2u^*_2)\varphi]=s_1\varphi(0)
\end{align*}
and \eqref{SS1b} is satisfied in the classical sense.
Moreover
\begin{align}\label{higherreg}
u_1^*+p_1|x|/2\in X_1.
\end{align}

\end{theo}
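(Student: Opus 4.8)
The plan is to follow the scheme announced in the Overview: reduce the stationary problem to the scalar elliptic system \eqref{SS1}, solve that by Schauder's fixed point theorem exploiting a diagonally dominant $L_1$-structure, and prove uniqueness by applying the same structural device to the difference of two solutions. The reduction is the starting point. Putting $\partial_t u_4=\partial_t u_5=0$ in \eqref{SystemD}, \eqref{SystemE} gives $u_4=u_1u_3/(b_4+c_4)$ and $u_5=c_3u_2u_3/(b_5+c_5)$, i.e., after multiplication by $b_4$ and $b_5$, the last two identities of \eqref{miau}. Substituting these into the stationary form of \eqref{SystemC}, the production terms $c_4u_4+c_5u_5$ partially cancel the loss term $(u_1+c_3u_2)u_3$, leaving $u_3(k_1u_1+k_2u_2+b_3)=p_3$, that is $u_3=H(u_1,u_2)$; since $b_3>0$ the denominator $k_1x_1+k_2x_2+b_3$ is $\ge b_3>0$ for nonnegative arguments, so $H$ is there smooth and $0\le H\le p_3/b_3$. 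Feeding $u_3,u_4,u_5$ into the stationary forms of \eqref{SystemA}, \eqref{SystemB} and simplifying the same way produces exactly \eqref{SS1}. As $u_3,u_4,u_5$ then depend on $(u_1,u_2)$ through \eqref{miau} algebraically and Lipschitz-continuously, the whole theorem is reduced to showing that \eqref{SS1} has a unique nonnegative solution $(u_1^*,u_2^*)$ with $u_1^*\in X_{1/2}$, $u_2^*\in X_1$, enjoying the extra regularity \eqref{higherreg}.

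For existence I would use Schauder's theorem in an $L_1(I)$ setting. Given $v=(v_1,v_2)$ in the positive cone, I freeze the coefficient $H(v)$ and let $T(v)=(u_1,u_2)$ solve the resulting linear Neumann system, whose first equation carries the datum $p_1\delta$ (it is convenient first to replace $\delta$ by a mollification $\delta_n$, so that the associated map $T_n$ has $C^2$-image, and to remove the regularization at the end). The off-diagonal coefficients $-c_1,-c_2$ are nonpositive, so this linear system is cooperative and the maximum principle produces a unique nonnegative $T(v)$. The decisive a priori bound is obtained by integrating the two equations over $I$: the Neumann conditions annihilate the diffusion terms and, discarding the nonnegative $k_iH(v)$-terms, one gets $(b_1+c_1)\|u_1\|_{L_1}\le c_2\|u_2\|_{L_1}+p_1$ and $(b_2+c_2)\|u_2\|_{L_1}\le c_1\|u_1\|_{L_1}$; because $(b_1+c_1)(b_2+c_2)-c_1c_2=b_1b_2+b_1c_2+b_2c_1>0$ (this is where $b_1,b_2>0$ enters), these inequalities bound $\|u_1\|_{L_1}$ and $\|u_2\|_{L_1}$ by a constant independent of $v$ and of $n$. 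This is the diagonally dominant structure of the Overview --- the $2\times2$ zero-order matrix is column diagonally dominant with column excess at least $b_i>0$ --- and it is exactly what makes an $L_1$ (rather than $L_2$) analysis succeed without any restriction on the parameters: a naive $L_2$ energy identity (test the first equation with $u_1$, the second with $u_2$, add) would leave a cross term $(c_1+c_2)\int_I u_1u_2$ that is not absorbable unless $c_1,c_2$ are artificially small. One-dimensional elliptic regularity ($W^2_1(I)\subset C^1(\overline{I})$) then upgrades the $L_1$ bounds to uniform control of $u_2$ in $C^1(\overline{I})$ and, after extracting the corrector $p_1|x|/2$ (which removes the singular part and turns the first equation into one with an $L_1$ right-hand side), of $u_1+p_1|x|/2$ in $C^1(\overline{I})$. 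Hence $T$ maps a suitable closed, bounded, convex subset of $X^2_+$ into itself, is continuous, and has relatively compact image; Schauder then yields a fixed point, which is a nonnegative solution of \eqref{SS1}. I expect this a priori estimate, together with the choice of $L_1$ over $L_2$ forced by the non-symmetry of the zero-order part, to be the main obstacle.

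For uniqueness I would apply the same device to the difference $w_i=u_i-\tilde u_i$ of two nonnegative solutions; since $p_1\delta$ cancels, the $w_i$ are classical. The one genuinely computational point is to rewrite $k_iH(u_1,u_2)u_i-k_iH(\tilde u_1,\tilde u_2)\tilde u_i$: writing $A=k_1u_1+k_2u_2+b_3$ and $\tilde A=k_1\tilde u_1+k_2\tilde u_2+b_3$ and expanding $u_i\tilde A-\tilde u_iA$, one finds that the difference system takes the form $-w_1''+P_1w_1=Q_1w_2$, $-dw_2''+P_2w_2=Q_2w_1$ with nonnegative bounded coefficients for which $P_1-Q_2=b_1+k_1p_3b_3/(A\tilde A)\ge b_1>0$ and $P_2-Q_1=b_2+k_2p_3b_3/(A\tilde A)\ge b_2>0$. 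Multiplying the equations by $\mathrm{sgn}(w_1)$ and $\mathrm{sgn}(w_2)$ respectively, integrating over $I$ --- the second-order terms contribute nonnegatively by Kato's inequality together with the Neumann conditions --- and adding the two resulting inequalities gives $b_1\|w_1\|_{L_1}+b_2\|w_2\|_{L_1}\le\int_I(P_1-Q_2)|w_1|+\int_I(P_2-Q_1)|w_2|\le 0$, hence $w_1\equiv w_2\equiv 0$. Finally, bootstrapping in \eqref{SS1} gives $u_2^*\in X_1$ and, after removing the singular part once more, the regularity \eqref{higherreg}, so $u_1^*\in X_{1/2}$; defining $u_3^*,u_4^*,u_5^*$ via \eqref{miau} places them in $X_{1/2}$, and together with the reduction of the first paragraph this establishes the existence, uniqueness and asserted regularity of $\bold{u^*}$.
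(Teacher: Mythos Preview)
Your proposal is correct and follows essentially the same route as the paper: mollify $\delta$, apply Schauder to the map obtained by freezing $H$, derive uniform $L_1$ bounds from the column-diagonal-dominant structure of the zero-order matrix, pass to the limit, and prove uniqueness by exhibiting the same diagonally dominant structure for the difference system (the paper carries out precisely your algebraic computation of $k_iH(u)u_i-k_iH(\tilde u)\tilde u_i$, arriving at the same excesses $b_i+k_ip_3b_3/(A\tilde A)$). The only cosmetic difference is packaging: the paper collects the $L_1$ resolvent bound, the $W^2_1$ estimate, and positivity into a single lemma (Lemma~\ref{lem2}) proved by testing with $\mathrm{sgn}$ and Kato's inequality, whereas you separate positivity (cooperative structure/maximum principle) from the $L_1$ bound (plain integration, which needs positivity first); both are equivalent here.
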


A typical shape of the steady state is to be found in Figure 1. 

\begin{figure}[h]
\includegraphics[width=\textwidth]{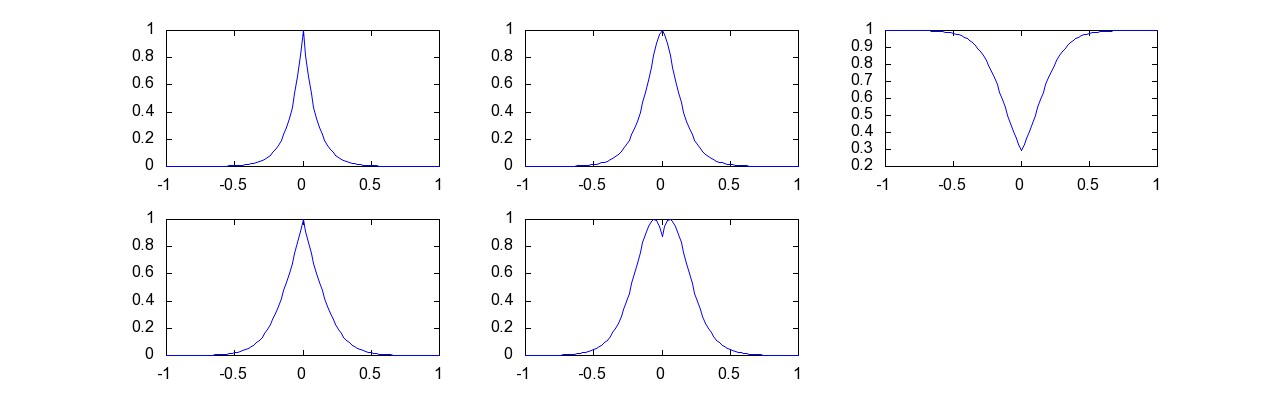}
\caption{Graph of $\bold{u}^*$ (normalised to 1) computed for the following values of parameters: $\bold{b}=[100,10,10,10,10],\ \bold{c}=[10,10,1,10,10],\ \bold{p}=[100,0,100,0,0],\ d=1/10$.
First row - $u_1^*/\n{u_1^*}_X$,$u_2^*/\n{u_2^*}_X$,$u_3^*/\n{u_3^*}_X$, second row - $u_4^*/\n{u_4^*}_X$,$u_5^*/\n{u_5^*}_X$. Notice the surprising difference in behavior of $u_4^*$ and $u_5^*$ near $x=0$ (see Remark \eqref{rem1} for explanation).}
\label{wykres}
\end{figure}

In the following remark we analyze the behavior of the stationary solution near the source of morphogen.\\
\begin{rem}[$\bold{u^*}$ near $x=0$]\label{rem1}
Observe that as $\bold{u^*}$ is unique it must be even. Indeed otherwise $\bold{u^*}(-x)$ would be a second solution as the system \eqref{SS1} is invariant under the transformation $x\to-x$. Thus using \eqref{higherreg}
\bs\label{ble}
\eq{
(u_1^*)'(0^+)&=-p_1/2<0\label{ble1},\\
(u_2^*)'(0)&=0,
}
\es
where $(u_1^*)'(0^+)$ denotes the right-sided derivative of $u_1^*$ at $x=0$. 

Using \eqref{miau} and \eqref{ble} we compute directly
\bs\label{bla}
\eq{
(u_3^*)'(0^+)&=\frac{p_1k_1}{2p_3}[H(u_1^*(0),u_2^*(0))]^2>0,\label{bla1}\\
(u_4^*)'(0^+)&=-\frac{p_1k_1k_2}{2p_3b_4}[H(u_1^*(0),u_2^*(0))]^2(u_2^*(0)+b_3/k_2)<0,\label{bla2}\\
(u_5^*)'(0^+)&=\frac{p_1k_1k_2}{2p_3b_5}[H(u_1^*(0),u_2^*(0))]^2u_2^*(0)>0.\label{bla3}
}
\es
In particular from \eqref{bla2}, \eqref{bla3} and the fact that $\bold{u^*}$ is even we infer that $u_4(0) \ (u_5(0))$ is a strict local maximum (minimum), which explains the difference near $x=0$ in $u_4^*$ (steep spike) and $u_5^*$ (depletion effect) as observed in Figure \eqref{wykres}.\\
\end{rem}

Next we turn our attention to the evolution problem and establish its well-posedness and the uniform boundedness of trajectories in $X^5$.\\ 
\begin{theo}
System $\eqref{System}$ possesses a unique, global in time, nonnegative solution
\bs\label{global_theo}
\eq{
u_1&\in C([0,\infty);X)\cap C^1((0,\infty);X)\cap C((0,\infty);X_{1/2})\\
u_2&\in C([0,\infty);X)\cap C^1((0,\infty);X)\cap C((0,\infty);X_1)\\
u_3,u_4,u_5&\in C^1([0,\infty);X)
}
\es
such that for every $\varphi\in X_{1/2}, t\in(0,\infty)$
\begin{align*}
\int_I\partial_t u_1\varphi+D\int_I\partial_xu_1\partial_x\varphi=\int_I[-(b_1+c_1+u_3)u_1+c_2u_2+c_4u_4]\varphi +p_1\varphi(0)
\end{align*}
and other equations are satisfied in the sense of $X$. Moreover $\bold{u}\in L_{\infty}(0,\infty;X^5)$
and the following estimates hold
\bs\label{EstCalc}
\eq{
\sum_{i=3}^5u_i(t)&\leq e^{-\underline{\bold{b}}t}\sum_{i=3}^5u_{i0}+p_3(1-e^{-\underline{\bold{b}}t})/\underline{\bold{b}}\label{EstCalc1},\\
\sum_{i\in\{1,2,4,5\}}\n{u_i(t)}_Y&\leq e^{-\underline{\bold{b}}t}\sum_{i\in\{1,2,4,5\}}\n{u_{i0}}_Y+p_1(1-e^{-\underline{\bold{b}}t})/\underline{\bold{b}}\label{EstCalc2}.
}
\es
\\
\end{theo}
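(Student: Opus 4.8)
The plan is to follow the roadmap sketched in the Overview: eliminate the singular source by subtracting the steady state, solve the resulting regular problem locally via sectorial operator theory, and then bootstrap to global existence using positivity and $L_1$ compensation estimates. Concretely, set $\bold{z}=\bold{u}-\bold{u^*}$ where $\bold{u^*}$ is the unique steady state from Theorem~1. Since $u_1^*+p_1|x|/2\in X_1$ and $u_1^*$ solves $-{u_1^*}''+\ldots=p_1\delta$ weakly, the term $p_1\delta$ cancels exactly in the equation for $z_1$: the Dirac data is absorbed into $\bold{u^*}$ and the $\bold{z}$-system has an honest $X$-valued (in fact $C(\bar I)$-continuous) right-hand side. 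The linear part is the diagonal operator $\mathrm{diag}(\partial^2_{xx}, d\,\partial^2_{xx}, 0,0,0)$ with Neumann conditions on the first two components, which generates an analytic semigroup on $X^5$ (the heat semigroup on $C(\bar I)$ with Neumann boundary, trivial on the last three coordinates). The nonlinearity $\bold{F}(\bold{z})$ — quadratic in $\bold{u}=\bold{z}+\bold{u^*}$ — is locally Lipschitz from $X^5$ (and from $X_{1/2}\times X_1\times X^3$) to $X^5$, so classical perturbation theory for sectorial operators (à la Henry/Pazy) gives a unique maximal mild solution $\bold{z}\in C([0,T_{max});X^5)$ with the asserted parabolic smoothing: $z_1,z_2\in C^1((0,T_{max});X)$, $z_2\in C((0,T_{max});X_1)$, $z_1\in C((0,T_{max});X_{1/2})$, and $z_3,z_4,z_5\in C^1([0,T_{max});X)$ since their equations are ODEs in $X$ with continuous right-hand side. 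Translating back, $\bold{u}$ enjoys exactly the regularity claimed in \eqref{global_theo}.

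Next I would establish nonnegativity. The vector field on the right of \eqref{System} is quasipositive: whenever $u_i=0$ and all other components are $\geq 0$, the $i$-th component of the field is $\geq 0$ (for $i=1$: $c_2u_2+c_4u_4+p_1\delta\geq 0$; for $i=3$: $c_4u_4+c_5u_5+p_3\geq 0$; etc.). By the standard invariant-region / positivity criterion for reaction-diffusion systems with this structure (using that the scalar heat semigroup with Neumann conditions is order-preserving), the nonnegative cone $X^5_+$ is forward invariant, so $\bold{u}(t)\in X^5_+$ for $t\in[0,T_{max})$. One must phrase this carefully because $\bold{u^*}$ is only $X_{1/2}$ in the first slot, but the $\bold{z}$-formulation has continuous data and the comparison argument can be run directly on $\bold{u}$ using the mild formulation: $u_i(t)=e^{t A_i}u_{i0}+\int_0^t e^{(t-\sigma)A_i}[\text{field}_i]\,d\sigma$, with the Dirac contribution to $u_1$ entering as $p_1\int_0^t (e^{\sigma A_1}\delta)(\cdot)\,d\sigma\geq 0$ since the Neumann heat kernel is nonnegative.

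With positivity in hand, the a priori bounds \eqref{EstCalc} are the crucial global step. Adding \eqref{SystemC}--\eqref{SystemE} the off-diagonal production terms cancel (the $+c_4u_4,+c_5u_5$ in \eqref{SystemC} match $-c_4u_4$ in \eqref{SystemD} and $-c_5u_5$ in \eqref{SystemE}, and the quadratic terms $u_1u_3, c_3u_2u_3$ cancel against those in \eqref{SystemC}), leaving $\partial_t(u_3+u_4+u_5)=-b_3u_3-(b_4+b_5)\cdots+p_3\leq -\underline{\bold{b}}(u_3+u_4+u_5)+p_3$ pointwise, which by Gronwall gives \eqref{EstCalc1}; in particular $u_3,u_4,u_5\in L_\infty(0,T_{max};X)$. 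Similarly, integrating \eqref{SystemA},\eqref{SystemB},\eqref{SystemD},\eqref{SystemE} over $I$ and summing, the Neumann boundary terms and all coupling terms cancel (this is the "diagonally dominant / $L_1$" mass structure), leaving $\frac{d}{dt}\sum_{i\in\{1,2,4,5\}}\n{u_i}_Y\leq -\underline{\bold{b}}\sum\n{u_i}_Y+p_1$, whence \eqref{EstCalc2} and $u_1,u_2\in L_\infty(0,T_{max};Y)$. Finally, to upgrade $u_1,u_2$ to $L_\infty(0,T_{max};X)$: write $u_1(t)=e^{tA_1}u_{10}+p_1\int_0^t e^{\sigma A_1}\delta\,d\sigma+\int_0^t e^{(t-\sigma)A_1}g_1(\sigma)\,d\sigma$ where $g_1=-(b_1+c_1+u_3)u_1+c_2u_2+c_4u_4$ is now bounded in $Y=L_1(I)$ (product of an $L_\infty(X)$ factor $u_3$ with an $L_\infty(Y)$ factor $u_1$, etc., plus $L_\infty(X)$ terms), and use the smoothing estimate $\n{e^{\sigma A_1}}_{\mathcal L(Y,X)}\leq C\sigma^{-1/2}$ for the Neumann heat semigroup (and $\n{e^{\sigma A_1}\delta}_X\leq C\sigma^{-1/2}$), whose time-integral $\int_0^t \sigma^{-1/2}d\sigma$ converges; this yields $\sup_{[0,T_{max})}\n{u_1(t)}_X<\infty$ and likewise for $u_2$. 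Hence $\bold{u}\in L_\infty(0,T_{max};X^5)$, which by the blow-up alternative of the sectorial theory forces $T_{max}=\infty$, and $\bold{u}\in L_\infty(0,\infty;X^5)$ with the stated estimates. The main obstacle is the last bootstrap: one must be careful that the nonlinear terms really do land in $L_1$ with a uniform-in-time bound using only the weaker norms already controlled — this is exactly where the product structure $L_\infty(X)\cdot L_\infty(Y)\hookrightarrow L_\infty(Y)$ and the integrability of the $\sigma^{-1/2}$ singularity are essential — and to handle the convolution with $\delta$ one needs the $\mathcal L(Y,X)$ (equivalently, the $L_1\to L_\infty$) smoothing bound for the Neumann heat semigroup with the correct $t^{-1/2}$ rate in one space dimension.
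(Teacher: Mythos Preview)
Your proposal follows the same architecture as the paper: subtract the steady state to remove the Dirac, obtain local well-posedness via sectorial perturbation theory, establish nonnegativity, derive the compensation estimates \eqref{EstCalc1}--\eqref{EstCalc2}, and bootstrap $u_1,u_2$ from $L_\infty(Y)$ to $L_\infty(X)$ via heat-semigroup smoothing. Two points deserve comment.

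First, your bootstrap to $L_\infty(0,\infty;X)$ has a gap as written. The integral $\int_0^t\sigma^{-1/2}\,d\sigma=2\sqrt t$ is finite for each fixed $t$ (so your argument does preclude finite-time blow-up and gives $T_{max}=\infty$), but it is \emph{not} uniformly bounded as $t\to\infty$, so it does not by itself yield $\bold{u}\in L_\infty(0,\infty;X^5)$; the same remark applies to $\int_0^t\n{e^{\sigma A}\delta}_X\,d\sigma$. The paper handles this by working in the $z$-variable (no Dirac term) and shifting the generator: writing $z_1'-(A_X-1)z_1=f_1(\bold z)+z_1$, the Duhamel kernel becomes $e^{-s}e^{sA_Y}$ and one integrates $(1\wedge s)^{-1/2}e^{-s}$ over $(0,\infty)$, which \emph{is} finite. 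If you prefer your $u$-variable formulation with the explicit Dirac, the same fix works: absorb $-b_1u_1$ (or just $-u_1$) into the linear part so that the damping supplies the missing integrability at $s=\infty$. Incidentally, the paper concludes $T_{max}=\infty$ \emph{before} the bootstrap, directly from the $L_\infty(X)$ bound on $u_3,u_4,u_5$, since this already makes $\bold f(\bold z)$ linearly bounded in $\bold z$ and the blow-up criterion \eqref{blow} applies.

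Second, for positivity the paper does not invoke an abstract invariant-region principle. It introduces an auxiliary system in which the off-diagonal couplings are replaced by positive parts $(v_j)_+$, proves $v\geq0$ by the $L_2$ energy identity obtained by multiplying each equation by $-(v_i)_-$ and summing, and then identifies $v$ with $u$ by uniqueness. Your quasipositivity sketch via the mild formulation and nonnegativity of the Neumann heat kernel is morally equivalent, but, as you yourself flag, the Dirac data makes a direct comparison argument delicate; the paper's truncation device sidesteps this cleanly.
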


We conclude with a remark concerning the discussion about the asymptotic behavior. \\

\begin{rem}[Asymptotics]\label{rem2}
For the case of morphogen Dpp acting in the imaginal wing disc of the fruit fly without the presence of glypicans, it is proved in \cite{KLW1} that the morphogen gradient (i.e. steady state of the appropriate evolution system) is globally exponentially stable.  
It is expected that an analogous result should hold for \textbf{[HKCS].1D}, though we are not able to prove even the local stability of the steady state. However it may also be the case that the presence of glypicans within a certain range of parameters has a destabilising effect on the equilibrium.
\end{rem}

\section{Lemmas}

In this section we collect lemmas which are used in the proofs of the results. The first lemma concerns well-posedness of the abstract ODE's in Banach spaces, while the second states that realisations of one dimensional Laplace operator in the chosen Banach spaces are sectorial. Since these lemmas are well known we state them only to make the article more self-contained. For the proofs we refer the interested reader to \cite{Lor}, chapter 6.1 and \cite{Lun}, chapter 3.1. 
\\

\begin{lem}\label{lem3}
Assume that \begin{enumerate}
\item $V_1\subset V$ are arbitrary, densely imbedded Banach spaces.
\item $A:V\supset V_1\to V$ is a sectorial operator.
\item $F:V\to V$ is Lipschitz on bounded subsets.
\end{enumerate}
Then for given $v_0\in V,$ the Cauchy problem
\bs\label{Cauchy}
\eq{
v'-Av&=F(v), &&t>0\\
v(0)&=v_0
}
\es
has a unique, strong, maximal solution
\begin{align*}
v\in C([0,T_{max});V)\cap C^1((0,T_{max});V)\cap C((0,T_{max});V_1).
\end{align*}
The following Duhamel formula holds
\begin{align*}
v(t)=e^{At}v_0+\int_0^te^{A(t-s)}F(v(s))ds, \quad t\in[0,T_{max}).
\end{align*}
Moreover $T_{max}$ is characterised by the following blow-up condition:
\begin{align}
T_{max}<\infty \ {\rm implies} \ \limsup_{t\to T_{max}^-}\n{v(t)}_V=\infty\label{blow}.
\end{align}
In particular if there exists $C>0$ such that for $t\in[0,T_{max})$
\begin{align}
\n{F(v(t))}_V\leq C(\n{v(t)}_V+1)\label{blowcond},
\end{align}
then $T_{max}=\infty$.
\end{lem}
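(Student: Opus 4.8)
The plan is to realise the solution as the fixed point of the Duhamel map on a small time interval, extend it to a maximal interval by a standard continuation argument, and then upgrade it from a mild to a strong solution using parabolic smoothing. Since $A$ is sectorial it generates an analytic semigroup $(e^{At})_{t\ge 0}\subset\mathcal L(V)$ with the usual bounds $\n{e^{At}}_{\mathcal L(V)}\le Me^{\omega t}$ and $\n{Ae^{At}}_{\mathcal L(V)}\le M_1t^{-1}e^{\omega t}$ for $t>0$, and $t\mapsto e^{At}w$ continuous on $[0,\infty)$ for every $w\in V$. Fix $v_0\in V$, choose $R>\n{v_0}_V$, and let $L=L(R)$ be a Lipschitz constant of $F$ on the closed ball of radius $R$ in $V$. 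On the complete metric space
\begin{align*}
\mathcal X_\tau=\{v\in C([0,\tau];V)\colon \ \sup_{0\le t\le\tau}\n{v(t)-v_0}_V\le R-\n{v_0}_V\}
\end{align*}
define $(\Phi v)(t)=e^{At}v_0+\int_0^t e^{A(t-s)}F(v(s))\,ds$. Using the semigroup bounds, the continuity of $t\mapsto e^{At}v_0$ at $t=0$, and the bound $\n{F(v(s))}_V\le\n{F(v_0)}_V+LR$ on $\mathcal X_\tau$, one checks that for $\tau$ small enough --- depending on $v_0$ only through $\n{v_0}_V$ (and on $M,\omega,L,\n{F(v_0)}_V$) --- the map $\Phi$ sends $\mathcal X_\tau$ into itself and is a contraction. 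Banach's fixed point theorem yields a unique local mild solution.

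A Gronwall estimate applied to the difference of two mild solutions gives uniqueness on any common interval, so local solutions glue together and extend to a maximal interval $[0,T_{max})$. For the blow-up alternative \eqref{blow}, suppose $T_{max}<\infty$ and $R_0:=\limsup_{t\to T_{max}^-}\n{v(t)}_V<\infty$; pick $t_0<T_{max}$ with $\n{v(t_0)}_V<R_0+1$ and $T_{max}-t_0<\tau(R_0+1)$, where $\tau(R_0+1)$ is the existence time furnished by the previous step for data of norm at most $R_0+1$. Solving \eqref{Cauchy} from the datum $v(t_0)$ produces a solution on $[t_0,t_0+\tau(R_0+1))$ which, by uniqueness, extends $v$ beyond $T_{max}$ --- a contradiction. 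Finally, if \eqref{blowcond} holds then the Duhamel formula gives $\n{v(t)}_V\le Me^{\omega t}\n{v_0}_V+\int_0^t Me^{\omega(t-s)}C(\n{v(s)}_V+1)\,ds$, and Gronwall's lemma bounds $\n{v(t)}_V$ on every bounded subinterval of $[0,T_{max})$; hence $T_{max}=\infty$ by \eqref{blow}.

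It remains to show the mild solution is strong with the stated regularity, which I would do by a two-step bootstrap. First, $v\in C^{\theta}_{loc}((0,T_{max});V)$ for every $\theta\in(0,1)$: the term $t\mapsto e^{At}v_0$ is $C^1$ on $(0,\infty)$ into $V$ by analyticity, and the convolution $w(t)=\int_0^t e^{A(t-s)}F(v(s))\,ds$, whose integrand is bounded in $V$ on compact time intervals, is locally $\theta$-Hölder because $\n{(e^{Ah}-I)e^{A\sigma}}_{\mathcal L(V)}\le C h^{\theta}\sigma^{-\theta}$ and $\int_0^t\sigma^{-\theta}\,d\sigma<\infty$. Since $F$ is Lipschitz on bounded sets, $s\mapsto F(v(s))$ is then locally $\theta$-Hölder on $(0,T_{max})$. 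Second, the classical regularity theory for analytic semigroups applies: $w\in C((0,T_{max});V_1)\cap C^1((0,T_{max});V)$ with $w'=Aw+F(v)$, and $t\mapsto e^{At}v_0$ enjoys the same membership on $(0,\infty)$; adding the two terms identifies $v$ as the strong maximal solution of \eqref{Cauchy}, with the Duhamel representation holding by the fixed-point construction.

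The hard part will be this last regularity bootstrap: establishing local Hölder continuity of $t\mapsto F(v(t))$ on $(0,T_{max})$ so that the parabolic regularity for the convolution term becomes available, all while keeping track of the fact that $v_0$ lies only in $V=V_0$ rather than in $V_1$, so the smoothing of the analytic semigroup must be exploited near --- but not at --- $t=0$. Everything else is the routine contraction-mapping-plus-Gronwall machinery; for the complete details one may consult \cite{Lor}, Chapter 6.1, or \cite{Lun}, Chapter 3.1, as the authors indicate.
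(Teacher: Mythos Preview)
The paper does not actually prove this lemma: at the start of Section~4 it declares the result well known and simply refers to \cite{Lor}, Chapter~6.1 and \cite{Lun}, Chapter~3.1. Your sketch follows exactly the standard route one finds in those references (contraction mapping for the Duhamel map, Gronwall for uniqueness and continuation, blow-up alternative, H\"older bootstrap to strong regularity), so you are aligned with --- and considerably more explicit than --- the paper.

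One technical point is worth flagging. In your local-existence step you assert that the existence time $\tau$ depends on $v_0$ only through $\n{v_0}_V$ (and $\n{F(v_0)}_V$), and you then use this in the blow-up argument via a uniform lower bound $\tau(R_0+1)$ for all data of norm at most $R_0+1$. But with $\mathcal X_\tau$ centered at the constant function $v_0$, the condition $\Phi v\in\mathcal X_\tau$ requires $\n{e^{At}v_0-v_0}_V$ to be small, and the $\tau$ achieving this depends on $v_0$ itself, not merely on its norm --- strong continuity of an analytic semigroup is not uniform on bounded sets when $A$ is unbounded. The usual remedy (as in \cite{Lun}) is to center the ball at $t\mapsto e^{At}v_0$ instead; then only the convolution term must be controlled, the existence time becomes uniform on norm-bounded sets of data, and your continuation argument goes through verbatim.
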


For $Z\in\{X,Y\}$ we define the $Z$-realisation of the  Laplace operator with Neumann boundary conditions on $I$:
\begin{align*}
A_Z&:Z\supset Z_1\to Z, \ A_Zu=u'', \ u\in Z_1
\end{align*}

\begin{lem}\label{lem1}
$A_Z$ is a sectorial, densely defined operator with compact resolvent. It generates an analytic, strongly continuous semigroup $e^{tA_Z}$ and for $t>0$ the following estimates hold
\begin{align*}
\n{e^{tA_Z}}_{\mathcal{L}(Z)}&\leq 1,\quad
\n{e^{tA_Y}}_{\mathcal{L}(Y,X)}\leq C(1\wedge t)^{-1/2}.
\end{align*}
Moreover $(A_X,e^{tA_X})$ is a restriction of $(A_Y,e^{tA_Y})$ to $X$ i.e.
\begin{align*}
A_Xu&=A_Yu, \ u\in X_1,\quad  e^{tA_X}u=e^{tA_Y}u, \ (t,u)\in[0,\infty)\times X.
\end{align*}
\end{lem}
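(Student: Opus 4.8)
The claims are standard facts about the one‑dimensional Neumann Laplacian in the sup‑norm and in $L_1$, so the plan is to assemble them from the classical theory rather than to reprove everything from scratch, and to be careful only at the two genuinely delicate points: sectoriality in the non‑Hilbert spaces $X=C(\overline I)$ and $Y=L_1(I)$, and the $Y\to X$ smoothing estimate with the sharp exponent $-1/2$. First I would recall that $A_X$ and $A_Y$ are realisations of $u\mapsto u''$ with Neumann boundary conditions, that $X_1=C^2_N(\overline I)$ and $Y_1=W^2_{1,N}(I)$ are dense in $X$ and $Y$ respectively (density in $C(\overline I)$ is clear since $C^2_N$ contains all constants and is dense; density in $L_1$ follows because $C^2_N$ is already dense in $L_1$), and that the resolvent is compact by the Rellich–Kondrachov–type imbeddings $X_1\hookrightarrow\hookrightarrow X$ and $Y_1\hookrightarrow\hookrightarrow Y$. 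For sectoriality I would invoke the results in \cite{Lun}, Chapter 3.1, where the Neumann Laplacian on an interval is shown to generate an analytic semigroup on $C(\overline I)$ and on $L^p(I)$ for $1\le p<\infty$; alternatively one verifies the resolvent estimate $\n{(\lambda-A_Z)^{-1}}_{\mathcal L(Z)}\le M/|\lambda|$ on a sector directly from the explicit one‑dimensional Green's function. Strong continuity of $e^{tA_Z}$ is then automatic from analyticity together with density of the domain.

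Next I would establish the contractivity $\n{e^{tA_Z}}_{\mathcal L(Z)}\le1$. For $Z=X$ this is the maximum principle: if $u_0\in X$ and $u(t)=e^{tA_X}u_0$ solves the heat equation with Neumann data, then $\n{u(t)}_\infty\le\n{u_0}_\infty$ by the parabolic maximum principle (the Neumann condition prevents the extremum from being created at the boundary). For $Z=Y=L_1$ one uses the dual statement: the $L_\infty$ contractivity of the adjoint semigroup (again the Neumann heat semigroup, which is self‑adjoint on $L_2$ and acts consistently on the whole $L^p$ scale) gives $\n{e^{tA_Y}}_{\mathcal L(Y)}\le1$ by duality, or equivalently one integrates the equation and uses that $\int_I\partial_t|u|\le0$ because the boundary terms vanish under the Neumann condition — i.e. the heat semigroup is a Markov/sub‑Markov semigroup, hence an $L_1$‑contraction.

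The main obstacle is the ultracontractivity‑type bound $\n{e^{tA_Y}}_{\mathcal L(Y,X)}\le C(1\wedge t)^{-1/2}$. The natural route is to use the explicit heat kernel $p_t(x,y)$ of the Neumann Laplacian on $I=(-1,1)$, obtained from the full‑line Gaussian by the method of images (reflecting across both endpoints): then $e^{tA_Y}f(x)=\int_I p_t(x,y)f(y)\,dy$ with $\sup_{x,y}p_t(x,y)\le C t^{-1/2}$ for $0<t\le1$, which immediately yields $\n{e^{tA_Y}f}_\infty\le Ct^{-1/2}\n f_{L_1}$; for $t\ge1$ the kernel is bounded uniformly so the factor $(1\wedge t)^{-1/2}$ is the correct statement. (One can also argue semigroup‑theoretically: $L_1\to L_2$ in time $t^{-1/4}$ and $L_2\to L_\infty$ in time $t^{-1/4}$, composed via $e^{tA}=e^{(t/2)A}e^{(t/2)A}$, giving $t^{-1/2}$; but the image‑kernel argument is cleanest.) Finally, consistency of $(A_X,e^{tA_X})$ with $(A_Y,e^{tA_Y})$ on $X\subset Y$ follows because both semigroups are given by integration against the same kernel $p_t(x,y)$ and both generators act as $u\mapsto u''$ on functions in $X_1\subset Y_1$; equivalently, the resolvents agree on $X$, and uniqueness of the Laplace transform transfers this to the semigroups. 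This identification is exactly what the evolution part of the paper needs, since it lets one run the fixed‑point/Duhamel argument in $Y$ while reading off $C(\overline I)$‑regularity at positive times. As noted, all of this is classical and I would simply cite \cite{Lor}, Ch.~6.1 and \cite{Lun}, Ch.~3.1 for the details.
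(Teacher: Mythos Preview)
Your proposal is correct and matches the paper's own treatment: the paper does not prove Lemma~\ref{lem1} at all but simply declares the results well known and refers the reader to \cite{Lor}, Chapter~6.1 and \cite{Lun}, Chapter~3.1, exactly the sources you invoke. Your sketch (maximum principle for the $X$-contraction, duality/Markov property for the $Y$-contraction, image kernel for the $Y\to X$ smoothing, and resolvent consistency for the restriction statement) is more than the paper provides and is entirely sound.
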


The third lemma concerns solvability of linear elliptic systems with diagonally dominant zero order term. It is crucial in the proofs of existence and uniqueness of the steady state of the system \eqref{System}.\\

\begin{lem}\label{lem2}
Assume that for $i,j=1,2, \ d_i>0,a_{ij}\in X_{+}$ and
\bs\label{aijcond}
\eq{
a_{11}-a_{21}&\geq0,\\
a_{22}-a_{12}&\geq0.
}
\es
Define operators
\begin{align*}
M&:Y^2\to Y^2, \ 
Mu=(-a_{11}u_1+a_{12}u_2,a_{21}u_1-a_{22}u_2),\\
G&:Y^2\supset Y_1^2\to Y^2, \ 
G=(d_1A_Y)\times(d_2A_Y)+M.
\end{align*}
Then $G$ is a sectorial, densely defined operator with a compact resolvent $R(\lambda,G)=(\lambda-G)^{-1}$ and the following hold 
\bs\label{lemthesis}
\eq{
&(0,\infty)\subset\rho(G) \ {\rm and} \  \n{R(\lambda,G)}_{\mathcal{L}(Y^2)}\leq1/\lambda,\label{lemthesis1}\\ 
&\n{R(\lambda,G)}_{\mathcal{L}(Y^2,Y_1^2)}\leq C(1+1/\lambda)\label{lemthesis2},\\
&R(\lambda,G) \ {\rm preserves} \ Y^2_{+}\label{lemthesis3},
}
\es
where $\lambda>0$ and $C$ depends only on $d_i,\n{a_{ij}}_X$.
\end{lem}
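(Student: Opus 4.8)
The plan is to prove Lemma \ref{lem2} in four stages: first establish sectoriality and the compact-resolvent property by viewing $G$ as a bounded perturbation of the diagonal operator $(d_1A_Y)\times(d_2A_Y)$; then prove the resolvent bound \eqref{lemthesis1} on the positive real axis by an $L_1$ duality/sign argument exploiting the diagonal dominance \eqref{aijcond}; then deduce \eqref{lemthesis2} from \eqref{lemthesis1} together with elliptic regularity for $A_Y$; and finally prove positivity preservation \eqref{lemthesis3} by a maximum-principle / truncation argument in $L_1$.

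For the first stage I would note that $M\in\mathcal{L}(Y^2)$ since each $a_{ij}\in X_{+}=C(\overline I)$ acts by multiplication as a bounded operator on $Y=L_1(I)$, with norm $\n{a_{ij}}_X$. By Lemma \ref{lem1} each $d_iA_Y$ is sectorial with compact resolvent and dense domain $Y_1$, hence so is the product $(d_1A_Y)\times(d_2A_Y)$ on $Y_1^2$; adding the bounded operator $M$ preserves sectoriality and the compactness of the resolvent (a standard perturbation result), and the domain $Y_1^2$ is unchanged and still dense. This gives the first sentence of the conclusion.

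The heart of the matter is \eqref{lemthesis1}. Fix $\lambda>0$ and $f=(f_1,f_2)\in Y^2$, and suppose $u=(u_1,u_2)\in Y_1^2$ solves $\lambda u - Gu = f$, i.e.
\begin{align*}
\lambda u_1 - d_1 u_1'' + a_{11}u_1 - a_{12}u_2 &= f_1,\\
\lambda u_2 - d_2 u_2'' + a_{22}u_2 - a_{21}u_1 &= f_2.
\end{align*}
Multiply the first equation by $\mathrm{sgn}(u_1)$ and the second by $\mathrm{sgn}(u_2)$ and integrate over $I$. Using the Neumann boundary conditions, the standard Kato-type inequality $\int_I (-u_i'')\,\mathrm{sgn}(u_i)\ge0$ holds (rigorously one smooths $\mathrm{sgn}$ and passes to the limit, or one works with $|u_i|$ via $\int (u_i')^2\,\rho'(u_i)\ge0$ for a convex approximation $\rho$ of $|\cdot|$). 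For the zero-order terms, $\int a_{11}u_1\,\mathrm{sgn}(u_1)=\int a_{11}|u_1|$, while $-\int a_{12}u_2\,\mathrm{sgn}(u_1)\ge -\int a_{12}|u_2|$, and symmetrically for the second equation. Adding the two resulting inequalities, the cross terms combine to $\int (a_{11}-a_{21})|u_1| + \int(a_{22}-a_{12})|u_2| \ge 0$ by \eqref{aijcond}, so these drop out favourably and one obtains
\begin{align*}
\lambda\bigl(\n{u_1}_Y+\n{u_2}_Y\bigr) \le \n{f_1}_Y+\n{f_2}_Y,
\end{align*}
i.e. $\n{u}_{Y^2}\le \n{f}_{Y^2}/\lambda$. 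Since $\lambda$ is in the resolvent set of the sectorial operator $G$ for $\lambda$ large, and this a priori bound prevents the resolvent from blowing up, a connectedness/continuation argument along $(0,\infty)$ (or simply noting $\lambda-G$ is injective with the stated bound and, being Fredholm of index zero by compactness of the resolvent, therefore surjective) gives $(0,\infty)\subset\rho(G)$ together with \eqref{lemthesis1}. I expect this $L_1$ sign computation — and in particular making the Kato inequality rigorous at the $W^2_1$ level of regularity — to be the main obstacle; everything else is comparatively routine.

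For \eqref{lemthesis2}, write the two scalar equations as $-d_i u_i'' = f_i - \lambda u_i - (\text{zero order terms})$ and use the elliptic estimate $\n{u_i}_{Y_1}\le C(\n{u_i''}_Y + \n{u_i}_Y)$ together with \eqref{lemthesis1} to bound $\n{u_i}_Y$ and $\n{u_i''}_Y$ by $C(1+1/\lambda)\n{f}_{Y^2}$, the constant depending only on $d_i$ and $\n{a_{ij}}_X$. Finally, for \eqref{lemthesis3} suppose $f\in Y^2_{+}$ and let $u=R(\lambda,G)f$; testing the first equation against $-\mathrm{sgn}(u_{1-})$ and the second against $-\mathrm{sgn}(u_{2-})$ (i.e. isolating the negative parts), the same Kato inequality and diagonal-dominance bookkeeping as above yield $\lambda(\n{u_{1-}}_Y+\n{u_{2-}}_Y)\le -\int f_1\,\mathrm{sgn}(u_{1-}) - \int f_2\,\mathrm{sgn}(u_{2-}) \le 0$ because $f_i\ge0$ and $\mathrm{sgn}(u_{i-})\ge 0$; hence $u_{1-}=u_{2-}=0$, i.e. $u\in Y^2_{+}$. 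This completes the proof.
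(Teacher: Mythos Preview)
Your proof is correct and follows essentially the same route as the paper's: sectoriality and compact resolvent by bounded perturbation, the $L_1$ sign/Kato argument exploiting diagonal dominance for the resolvent bound, elliptic regularity for the $Y_1^2$ estimate, and a negative-part test for positivity. The paper packages your zero-order bookkeeping into a single pointwise inequality $\langle Mu,(\gamma(u_1),\gamma(u_2))\rangle\le0$ valid for any $\gamma$ with $x\gamma(x)\ge0$, $|\gamma|\le1$ (applied once with $\gamma=\mathrm{sgn}$ for \eqref{lemthesis1} and once with $\gamma=(\mathrm{sgn}-1)/2$ for \eqref{lemthesis3}), and it disposes of the Kato inequality you flag by citing Brezis--Strauss.
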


\begin{proof}
To prove that $G$ is sectorial and has a compact resolvent notice that it is a perturbation of the operator $(d_1A_Y)\times(d_2A_Y)$ having these two properties by a bounded operator $M\in\mathcal{L}(Y^2)$. From the compactness of the resolvent of $G$ we get that the spectrum $\sigma(G)$ only contains eigenvalues (Theorem 6.29 \cite{Kat}). 
\\In the rest of the proof we will use the following observation. Let 
$\gamma:\mathbb{R}\to\mathbb{R}$ be a function such that $x\gamma(x)\geq0,\  |\gamma|\leq1$
. Then using \eqref{aijcond} we obtain the following pointwise inequality
\bs\label{LaurL}
\eq{
\Big<Mu,(\gamma(u_1),\gamma(u_2))\Big>&=-a_{11}u_1\gamma(u_1)+a_{12}u_2\gamma(u_1)+a_{21}u_1\gamma(u_2)-a_{22}u_2\gamma(u_2)\\
&=-u_1\gamma(u_1)(a_{11}-a_{21}\gamma(u_2)\gamma(u_1))-u_2\gamma(u_2)(a_{22}-a_{12}\gamma(u_1)\gamma(u_2))\leq0.
}
\es
Choose $\lambda>0, \ f\in Y^2, u\in Y_1^2$ such that 
\begin{align}\label{lem3eq1}
f=(\lambda-G)u.
\end{align}
To proove \eqref{lemthesis1} we estimate 
\begin{align*}
\n{f}_{Y^2}&\geq\int_I\Big<f,(sgn(u_1),sgn(u_2))\Big>=\lambda\n{u}_{Y^2}-\sum_{i=1}^2d_i\int_Iu_i''sgn(u_i)\\
&-\int_I\Big<Mu,(sgn(u_1),sgn(u_2))\Big>\geq\lambda\n{u}_{Y^2},
\end{align*}
where we used \eqref{LaurL} with $\gamma=sgn$ and the following Kato's inequality (see Lemma 2 in \cite{BreStr})
\begin{align}
-\int_Iv''sgn(v)\geq0, \quad v\in Y_1\label{KatoIneq}.
\end{align}

To prove \eqref{lemthesis2} observe that from \eqref{lem3eq1},\eqref{lemthesis1} we have
\begin{align*}
\n{Gu}_{Y^2}\leq\n{f}_{Y^2}+\lambda\n{u}_{Y^2}\leq2\n{f}_{Y^2},
\end{align*}
whence
\begin{align*}
\n{u}_{Y_1^2}&\leq C(\n{(A_Yu_1,A_Yu_2)}_{Y^2}+\n{u}_{Y^2})\leq C[(d_1\wedge d_2)^{-1}\n{Gu-Mu}_{Y^2}+\n{f}_{Y^2}/\lambda]\\
&\leq
C[(d_1\wedge d_2)^{-1}(\n{Gu}_{Y^2}+\n{M}_{\mathcal{L}(Y^2)}\n{u}_{Y^2})+\n{f}_{Y^2}/\lambda]\\
&\leq C[(d_1\wedge d_2)^{-1}(2+\n{M}_{\mathcal{L}(Y^2)}/\lambda)+1/\lambda]\n{f}_{Y^2}\\
&\leq C(1+1/\lambda)\n{f}_{Y^2},
\end{align*}

Finally to prove \eqref{lemthesis3} assume that $f\in Y^2_+$. Let $\phi(x)=(sgn(x)-1)/2$. Then $-1\leq\phi(x)\leq0$ and $x\phi(x)=x_-$. Using \eqref{LaurL} with $\gamma=\phi$ and \eqref{KatoIneq} we obtain
\begin{align*}
0\geq\int_I\Big<f,(\phi(u_1),\phi(u_2))\Big>&=\lambda\sum_{i=1}^2\int_Iu_i\phi(u_i)-\frac{1}{2}\sum_{i=1}^2d_i\int_Iu_i''sgn(u_i)-\int_I\Big<Mu,(\phi(u_1),\phi(u_2))\Big>\\
&\geq \lambda\n{u_-}_{Y^2},
\end{align*}
whence $u\geq0$.
\end{proof}

\section{Proof of Theorem 1}
We divide the proof into two parts. To prove existence of a solution of \eqref{SS1} we first approximate the singular source term $p_1\delta$ by more regular functions $h_n\in Y_+$. Using Schauder's fixed point theorem we prove solvability of the approximated problem. Finally using compactness methods we show that the approximated solutions converge to a solution of \eqref{SS1}. In the proof of uniqueness we show that the difference of any two possible steady states belongs to the kernel of a certain operator $\lambda-G$, where $\lambda>0$ and $G$ satisfies assumptions of Lemma \eqref{lem2}.

\subsection{Existence}
Choose a sequence $h_n\in Y_{+}$ such that $h_n\rightharpoonup^*\delta$ in $\mathcal{M}([-1,1])$ - the space of signed Radon measures.  
For $\bold{v}\in X_{+}^2$ consider the following problem
\bs\label{SS2}
\eq{
-u_1''+(b_1+c_1+k_1H(v_1,v_2))u_1-c_2u_2&=p_1h_n,&&x\in I\\
-du_2''-c_1u_1+(b_2+c_2+k_2H(v_1,v_2))u_2&=0,&&x\in I,\\
u_1'=u_2'&=0,&&x\in\partial I,
}
\es
where $H$ is defined in \eqref{Hdef}.
Using notation introduced in Lemma \eqref{lem2} system \eqref{SS2} is equivalent to
\begin{align*}
(\lambda-G)(u_1,u_2)=(p_1h_n,0),
\end{align*}
where
\begin{align*}
\lambda&=\underline{\bold{b}},\\
d_1&=1, d_2=d,\\
a_{11}&=b_1-\underline{\bold{b}}+c_1+k_1H(v_1,v_2),&&a_{12}=c_2,\\
a_{21}&=c_1,&&a_{22}=b_2-\underline{\bold{b}}+c_2+k_2H(v_1,v_2).
\end{align*}
Oserve that condition \eqref{aijcond} holds, thus using Lemma \eqref{lem2} we obtain that \eqref{SS2} has a unique solution $(u_1,u_2)\in Y_{1,+}^2$ and there exists  $C_1$ which do not depend on $(v_1,v_2),(u_1,u_2),h_n$ such that 
\begin{align}
\n{(u_1,u_2)}_{Y_1^2}\leq C_1\n{h_n}_Y\label{est}.
\end{align}
Using the compact imbedding
\begin{align}
Y_1\subset\subset X\label{comp}
\end{align}
and \eqref{est} we obtain that there exists $C_2$ such that
\begin{align}
\n{(u_1,u_2)}_{X^2}\leq C_2\n{(u_1,u_2)}_{Y_1^2}\leq C_1C_2\n{h_n}_Y\label{est2}.
\end{align}
Define 
\begin{align*}
V_n&=\{(v_1,v_2)\in X_{+}^2\colon \n{(v_1,v_2)}_{X^2}\leq C_1C_2\n{h_n}_Y\},\\
T_n&\colon V_n\to V_n, \ T_n(v_1,v_2)=(u_1,u_2), 
\end{align*}
where $(u_1,u_2)$ is the solution of \eqref{SS2}. Observe that $V_n$ is a closed and convex subset of a Banach space $X^2$ and $T_n$ is well defined, compact (because of \eqref{comp},\eqref{est2}) and continuous (because $H$ is a globally Lipschitz continuous function on $\mathbb{R}_{+}^2$). Hence by Schauder's theorem $T_n$ has a fixed point $(u_{n,1}^*,u_{n,2}^*)\in V_n$.\\
Since $(h_n)_{n=1}^{\infty}$ is bounded in $Y$ we get, by \eqref{est2}, that $(u_{n,1}^*,u_{n,2}^*)_{n=1}^{\infty}$ is bounded in $Y_1^2$. From the imbeddings $Y_1\subset X_{1/2}\subset\subset X$ there exist $(u_1^*,u_2^*)\in X_{1/2}$ and a subsequence $(u_{n_k,1}^*,u_{n_k,2}^*)_{k=1}^{\infty}$ such that for $i=1,2$
\begin{align*}
(u_{n_k,i}^*)'&\rightharpoonup^*(u_i^*)', \ \rm{in} \ L_{\infty}(I)\\
u_{n_k,i}^*&\to u_i^*, \ \rm{in} \ X.
\end{align*}
Fix $\varphi\in X_{1/2}$, then since $T_{n_k}(u_{n_k,1}^*,u_{n_k,2}^*)=(u_{n_k,1}^*,u_{n_k,2}^*)$ we have:
\bs\label{SSweak}
\eq{
\int_I(u_{n_k,1}^*)'\varphi'+[(b_1+c_1+k_1H(u_{n_k,1}^*,u_{n_k,2}^*))u_{n_k,1}^*-c_2u_{n_k,2}^*]\varphi&=s_1\int_Ih_{n_k}\varphi,\\
d\int_I(u_{n_k,2}^*)'\varphi'+[-c_1u_{n_k,1}^*+(b_2+c_2+k_2H(u_{n_k,1}^*,u_{n_k,2}^*))u_{n_k,2}^*]\varphi&=0,
}
\es
Using again the fact that $H$ is globally Lipschitz continuous on $\mathbb{R}_+^2$ we can pass in \eqref{SSweak} with $n_k\to\infty$ and obtain that $(u_1^*,u_2^*)$ is a solution of \eqref{SS2}. 

\subsection{Uniqueness}

Assume that $(u_1,u_2),(v_1,v_2)$ are two solutions of \eqref{SS1}. Noting $z_i=u_i-v_i$ for $i=1,2$ we have:
\begin{align*}
-z_1''+(b_1+c_1)z_1-c_2z_2+k_1(H(u_1,u_2)u_1-H(v_1,v_2)v_1)&=0\\
-dz_2''-c_1z_1+(b_2+c_2)z_2+k_2(H(u_1,u_2)u_2-H(v_1,v_2)v_2)&=0
\end{align*}
Define
\begin{align*}
D&=(k_1u_1+k_2u_2+b_3)(k_1v_1+k_2v_2+b_3)\\
w_i&=(u_i+v_i)/2, \ i=1,2
\end{align*}
and compute
\begin{align*}
u_1v_2-u_2v_1&=z_1(u_2+v_2)/2-z_2(u_1+v_1)/2=z_1w_2-z_2w_1\\
H(u_1,u_2)u_1-H(v_1,v_2)v_1&=p_3\Big(\frac{u_1}{k_1u_1+k_2u_2+b_3}-\frac{v_1}{k_1v_1+k_2v_2+b_3}\Big)=
\frac{p_3}{D}(k_2(u_1v_2-u_2v_1)+b_3z_1)\\
&=\frac{p_3}{D}((k_2w_2+b_3)z_1-k_2w_1z_2)\\
H(u_1,u_2)u_2-H(v_1,v_2)v_2&=p_3\Big(\frac{u_2}{k_1u_1+k_2u_2+b_3}-\frac{v_2}{k_1v_1+k_2v_2+b_3}\Big)=
\frac{p_3}{D}(-k_1(u_1v_2-u_2v_1)+b_3z_2)\\
&=\frac{p_3}{D}(-k_1w_2z_1+(k_1w_1+b_3)z_2).
\end{align*}
Thus
\begin{align*}
-z_1''+(b_1+\frac{k_1p_3b_3}{D}+c_1+\frac{k_1k_2p_3w_2}{D})z_1-(c_2+\frac{k_1k_2p_3w_1}{D})z_2&=0\\
-dz_2''-(c_1+\frac{k_1k_2p_3w_2}{D})z_1+(b_2+\frac{k_2p_3b_3}{D}+c_2+\frac{k_1k_2p_3w_1}{D})z_2&=0
\end{align*}

Hence, using the notation introduced in Lemma \eqref{lem2}, $(z_1,z_2)$ belongs to the kernel of the operator $\underline{\bold{b}}-G$ where 
\begin{align*}
d_1&=1,d_2=d\\ 
a_{11}&=b_1-\underline{\bold{b}}+\frac{k_1p_3b_3}{D}+c_1+\frac{k_1k_2p_3w_2}{D}, &&a_{12}=c_2+\frac{k_1k_2p_3w_1}{D},\\ a_{21}&=c_1+\frac{k_1k_2p_3w_2}{D}, &&a_{22}=b_2-\underline{\bold{b}}+\frac{k_2p_3b_3}{D}+c_2+\frac{k_1k_2p_3w_1}{D}.
\end{align*}
Since nonnegativity of $w_1, w_2$ ensures that assumption \eqref{aijcond} is fulfilled we infer that $z_1=z_2=0$ which finishes the proof.

\subsection{Regularity of $u_1^*$ outside $x=0$.}

Observe that $E=-p_1|x|/2$ satisfies $-E''=p_1\delta$
in the sense of distributions. Owing to \eqref{SS1a} $v=u_1^*-E$ solves
the following boundary value problem
\begin{align*}
-v''&=f,&&x\in I\\
v'&=-E',&&x\in \partial I 
\end{align*}
with $f=c_2u_2^*-(b_1+c_1+k_1H(u_1^*,u_2^*))u_1^*$. Since $f\in X$ then \eqref{higherreg} follows.
\section{Proof of Theorem 2}
Using the theory of analytic semigroups we first establish the local well-posedness of $\eqref{System}$. Using quasipositivity of the right hand side of $\eqref{System}$ we next prove that the generated semiflow preserves nonnegativity of initial conditions. Then using a compensation effect  we derive $L_{\infty}(0,\infty,X)$ estimate for $u_3,u_4,u_5$ and $L_{\infty}(0,\infty,Y)$ estimate for $u_1,u_2$. Finally thanks to the regularising properties of the semigroup $e^{A_Yt}$ we bootstrap the estimate to $\bold{u}\in L_{\infty}(0,\infty,X^5)$.

\subsection{Local existence}

We rewrite system \eqref{System} in the new variables
$\bold{z}=\bold{u}-\bold{u}^*$, where $\bold{u}^*$ is the unique steady state of $\eqref{System}$, and put it into the semigroup framework:
\bs\label{abstract}
\eq{
\bold{z}'-\bold{A_X}\bold{z}&=\bold{f}(\bold{z}), && t>0\\
\bold{z}(0)&=\bold{z_0}=\bold{u_0}-\bold{u}^*,
}
\es
where 
\begin{align*}
\bold{A_X}&=A_X\times (dA_X)\times 0^3\\
\bold{f}&=(f_1,f_2,f_3,f_4,f_5):X^5\to X^5\\
f_1(\bold{z})&=-(b_1+c_1)z_1-(z_1z_3+u_1^*z_3+u_3^*z_1)+c_2z_2+c_4z_4\\
f_2(\bold{z})&=-(b_2+c_2)z_2-c_3(z_2z_3+u_3^*z_2+u_2^*z_3)+c_1z_1+c_5z_5\\
f_3(\bold{z})&=-b_3z_3-(z_1z_3+u_1^*z_3+u_3^*z_1)-c_3(z_2z_3+u_3^*z_2+u_2^*z_3)+c_4z_4+c_5z_5\\
f_4(\bold{z})&=-(b_4+c_4)z_4+(z_1z_3+u_1^*z_3+u_3^*z_1)\\
f_5(\bold{z})&=-(b_5+c_5)z_5+c_3(z_2z_3+u_3^*z_2+u_2^*z_3).
\end{align*}
Observe that $\bold{A_X}$ generates an analytic, strongly continuous semigroup in $X^5: \\  e^{t\bold{A_X}}=e^{tA_X}\times e^{tdA_X}\times (Id)^3$. Moreover $\bold{f}$ is Lipschitz continuous on bounded subsets of $X^5$. Using Lemma \eqref{lem3} we obtain that \eqref{abstract} possesses a unique solution defined on a maximal time interval $[0,T_{max})$ with the following regularity:
\begin{align*}
z_1,z_2&\in C([0,T_{max});X)\cap C^1((0,T_{max});X)\cap C((0,T_{max});X_1)\\
z_3,z_4,z_5&\in C^1([0,T_{max});X).
\end{align*}
Setting $\bold{u}=\bold{z}+\bold{u^*}$ it is obvious that $\bold{u}$ is the unique solution to \eqref{System}.
\subsection{Positive cone invariance}

Consider the following system
\bs\label{SystemV}
\eq{
\partial_t v_1-\partial^2_{xx} v_1&=-(b_1+c_1+(v_3)_+)v_1+c_2(v_2)_++c_4(v_4)_++p_1\delta,&&(t,x)\in I_{\infty}\label{SystemVA}\\
\partial_t v_2-d\partial^2_{xx} v_2&=-(b_2+c_2+c_{3}(v_3)_+)v_2+c_1(v_1)_++c_5(v_5)_+,&&(t,x)\in I_{\infty}\label{SystemVB}\\
\partial_t v_3&=-(b_3+(v_1)_++c_{3}(v_2)_+)v_3+c_4(v_4)_++c_5(v_5)_++p_3,&&(t,x)\in I_{\infty}\label{SystemVC}\\
\partial_t v_4&=-(b_4+c_4)v_4+(v_1)_+(v_3)_+,&&(t,x)\in I_{\infty}\label{SystemVD}\\
\partial_t v_5&=-(b_5+c_5)v_5+c_{3}(v_2)_+(v_3)_+,&&(t,x)\in I_{\infty}\label{SystemVE}
}
\es
with boundary and initial conditions
\begin{align*}
\partial_x v_1&=\partial_x v_2=0,&&(t,x)\in(\partial I)_{\infty}\\
\mathbf{v}(0,\cdot)&=\bold{u_0},&&x\in I
\end{align*}

Reasoning as in the previous section, the system \eqref{SystemV} possesses a unique maximally defined solution $\bold{v}(t)$ on $[0,T_{max}')$ in $X^5$. We will now prove that $\bold{v(t)}\geq0$ for $t\in[0,T_{max}')$. Multiplying \eqref{SystemV} by $-(\bold{v})_-$ and adding equations we obtain:
\begin{align*}
d/dt\sum_{i=1}^5\n{(v_i)_-}^2_2+\n{\partial_x(v_1)_-}_2^2+2d\n{\partial_x(v_2)_-}_2^2&\leq0\\
\sum_{i=1}^5\n{(v_i(0))_-}^2_2&=0.
\end{align*}
Thus $\bold{v}(t)\geq0$ for $t\in[0,T_{max}')$. Then $\bold{v}_+=\bold{v}$ and it readily follows from $\eqref{SystemV}$ that $\bold{v}$ solves $\eqref{System}$ on $[0,T_{max}')$. Consequently, $\bold{v}=\bold{u}$ on $[0,T_{max}')$ and $T_{max}'\leq T_{max}$. Finally observe that if $T_{max}'<\infty$ then, by \eqref{blow}, $\limsup_{t\to T_{max}'}\n{\bold{u}(t)}_{X^5}=\limsup_{t\to T_{max}'}\n{\bold{v}(t)}_{X^5}=\infty$ thus $T_{max}'=T_{max}$ and $\bold{u(t)}\geq0$ on $[0,T_{max})$.

\subsection{$u_3,u_4,u_5\in L_{\infty}(0,T_{max};X)$}
Adding equations \eqref{SystemC},\eqref{SystemD},\eqref{SystemE} and using nonnegativity of $\bold{u}$, we obtain for every $x\in I$
\begin{align*}
\partial_t \sum_{i=3}^5u_i+\underline{b}\sum_{i=3}^5u_i\leq p_3, \quad t\in[0,T_{max}).
\end{align*}
Thus 
\begin{align}\label{u3u4u5infty}
0\leq \sum_{i=3}^5u_i\leq e^{-\underline{b}t}\sum_{i=3}^5u_{i0}+p_3(1-e^{-\underline{b}t})/\underline{b}, \quad t\in[0,T_{max}).
\end{align}
\subsection{$T_{max}=\infty$}
Observe that, due to \eqref{u3u4u5infty}, $\bold{f(z(t))}$ satisfies \eqref{blowcond} with $V=X^5$ hence $T_{max}=\infty$ by Lemma \eqref{lem3} .

\subsection{$u_1,u_2\in L_{\infty}(0,\infty;Y)$}
After integrating equations \eqref{SystemA}, \eqref{SystemB}, \eqref{SystemD}, \eqref{SystemE} over the set $I$ and adding them together we obtain
\begin{align*}
\frac{d}{dt}(\sum_{i\in\{1,2,4,5\}}\n{u_i}_Y)+\underline{b}\sum_{i\in\{1,2,4,5\}}\n{u_i}_Y\leq p_1,
\end{align*}
Thus
\begin{align}\label{u1u2u4u5L1}
\sum_{i\in\{1,2,4,5\}}\n{u_i(t)}_Y&\leq e^{-\underline{b}t}\sum_{i\in\{1,2,4,5\}}\n{u_{i0}}_Y+p_1(1-e^{-\underline{b}t})/\underline{b}.
\end{align}

\subsection{$u_1,u_2\in L_{\infty}(0,\infty;X)$}
From \eqref{u3u4u5infty}, \eqref{u1u2u4u5L1} we obtain that $f_1(\mathbf{z})+z_1\in L_{\infty}(0,\infty;Y)$.
Using the Duhamel formula and estimates from Lemma \eqref{lem1} we get
\begin{align*}
\n{z_1(t)}_X&\leq e^{-t}\n{e^{tA_X}}_{\mathcal{L}(X)}\n{z_{10}}_X+\int_0^te^{-s}\n{e^{sA_Y}}_{\mathcal{L}(Y,X)}\n{f_1(\bold{z}(t-s))+z_1(t-s)}_{Y}ds\\
&\leq\n{z_{10}}_X+C\int_0^{\infty}(1\wedge s)^{-1/2}e^{-s}ds\n{f_1(\mathbf{z})+z_1}_{L_{\infty}(Y)}\leq\n{z_{10}}_X+C\n{f_1(\mathbf{z})+z_1}_{L_{\infty}(Y)},
\end{align*}
whence $u_1\in L_{\infty}(0,\infty;X)$. A similar argument gives $u_2\in L_{\infty}(0,\infty;X)$ and completes the proof.

\section{Acknowledgement}
The author would like to express his gratitude towards his PhD supervisors Philippe Lauren\c{c}ot and Dariusz Wrzosek for their constant encouragement and countless helpful remarks and towards his numerous colleagues for stimulating discussions. \\
The author was supported by the International Ph.D. Projects Programme of Foundation for Polish Science operated within the Innovative Economy Operational Programme 2007-2013 funded by European
Regional Development Fund (Ph.D. Programme: Mathematical Methods in Natural Sciences). \\
Part of this research was carried out during the author's visit to the Institut de Math\'ematiques de Toulouse, Universit\'e Paul Sabatier, Toulouse III.

\end{document}